\providecommand{\customgenericname}{}
\newcommand{\newcustomtheorem}[2]{%
  \newenvironment{#1}[1]
  {%
   \renewcommand\customgenericname{#2}%
   \renewcommand\theinnercustomgeneric{##1}%
   \innercustomgeneric
  }
  {\endinnercustomgeneric}
}
\numberwithin{equation}{section}
\theoremstyle{plain}
\newtheorem{theorem}{Theorem}[section]
\newtheorem{lemma}[theorem]{Lemma}
\theoremstyle{definition}
\newtheorem{definition}[theorem]{Definition}
\newtheorem{remark}[theorem]{Remark}
\newtheorem*{theorem*}{Theorem}
\newcommand{\la}{\lambda}
\newcommand{\R}{\mathbb{R}}
\newcommand{\re}{\mathrm{Re}}
\newcommand{\im}{\mathrm{Im}}
\newcommand\ef{e^{-\lambda f}u}
\newcommand\blfootnote[1]{%
  \begingroup
  \renewcommand\thefootnote{}\footnote{#1}%
  \addtocounter{footnote}{-1}%
  \endgroup
}
\def\XXint#1#2#3{{\setbox0=\hbox{$#1{#2#3}{\int}$}
		\vcenter{\hbox{$#2#3$}}\kern-.5\wd0}}
\begin{document}

\title[]{Carleman estimates for third order operators of KdV and non KdV-type and applications}

\author[Serena Federico]{Serena Federico}

\address{Serena Federico
\newline \indent Dipartimento di Matematica, Universit\`a di Bologna
\newline \indent Piazza di Porta San Donato 5, 40126, Bologna, Italy}
\email{serena.federico2@unibo.it}

\subjclass[2020]{35A01, 35B45, 35G05, 35Q53}
\keywords{Third order equations with variable coefficients, variable coefficient KdV-type equations, Carleman estimates, local solvability}

\begin{abstract}
In this paper we study a class of variable coefficient third order partial differential operators on $\R^{n+1}$, containing, as a subclass, some variable coefficient operators of KdV-type in any space dimension. For such a class, as well as for the adjoint class, we obtain a Carleman estimate and the local solvability at any point of $\R^{n+1}$. A discussion of possible applications in the context of dispersive equations is provided.
\end{abstract}

\maketitle

\setcounter{tocdepth}{1}
\tableofcontents

\parindent = 10pt     
\parskip = 8pt

\section{Introduction}
\blfootnote{The author has no conflict of interest to disclose.}
In this paper, we will continue the investigation of some variable coefficient PDOs (partial differential operators) built from a  system of real smooth vector fields, initiated in the works \cite{FP2}, \cite{FP3}, \cite{F2} and \cite{F4} (see also \cite{FP1}, \cite{F}, \cite{F3}). Since when the celebrated works by Kolmogorov \cite{Ko} first, and by H\"ormander \cite{Ho2} afterwards, about the hypoellipticity of operators written as sums of squares of vector fields were published, a lot of connected problems have been investigated. In particular, H\"ormander's hypoelliptic theorem in  \cite{Ho2} opened up the study of sub-Laplacians on Lie groups (see \cite{RS}), of parametrices for such operators, of sharp estimates in appropriate functional spaces (see the pioneering works \cite{FoSt} and \cite{RS}), but also, among other questions, that of the local solvability of closely related models (see, for instance, \cite{LR}, \cite{MPR}, \cite{FP2}, \cite{FP3}, \cite{F2},\cite{F4} and references therein).

In the series of works \cite{FP2}, \cite{FP3}, \cite{F2} and \cite{F4}, the authors focus on the local solvability of some classes of degenerate second order PDOs built from a system of real smooth vector fields, plus a somewhere vanishing function. The use of the vanishing function is twofold: on one side it adds degeneracy to a model which is already degenerate by itself, on the other side it permits to include in the treatment some operators like the Kannai operator, that is operators having a changing sign principal symbol. The introduction of the vanishing function requires a price to pay, price which is given in terms of conditions on the lower order terms of the operators (see \cite{FP1}, \cite{F}, \cite{F3} for an overview of such results). Nevertheless, the classes studied in these works encompass different lower order terms, which makes it possible to include parabolic-type operator, Schr\"odinger-type operators, a blend of the two, and some prototypes with non-smooth coefficients.

What we do in the present paper, is to take inspiration by these works and by the form of some operators relevant in physics, and build a class of operators of order three (instead of two) starting from a system of smooth real vector fields. Let us say right away that we will not involve any additional somewhere vanishing function, since the model under consideration is already highly degenerate and complicated without this adjustment. 

Besides the connection with the aforementioned previous works, the interest for such kind of operators has several motivations (see \cite{KS} and references therein). One of them is represented by the applications to variable coefficient dispersive equations of KdV-type, where we refer to KdV-type operators as those of the form $i\partial_t+\mathcal{L}(t,x,D_x)$, with $\mathcal{L}$ being a third order PDO with smooth coefficients. \\
\indent In the last decades variable coefficient Schr\"odinger equations have attracted lots of attention. Smoothing and Strichartz estimates have been proved under different hypotheses (see \cite{CS},\cite{ST}, \cite{MMT}, \cite{KPRV}, \cite{FS1}, \cite{FS0}, \cite{FR}, \cite{F0}, \cite{RZ} and references therein), nonlinear problems have been solved (see \cite{MMT}, \cite{KPRV}, \cite{FS1}, \cite{FS0}, \cite{FR}, \cite{F0}), and uniqueness results have been proved (see \cite{EKPV1}, \cite{EKPV3}, \cite{EKPV4} and \cite{KPPV} in the constant coefficient case, and \cite{CF},\cite{FLY} and references therein for variable coefficient cases).
For KdV-type equations the investigation has not been pushed that far, possibly because of the unknown real analogue of this equation in dimensions higher than two, and also because variable coefficient third order equations can be much more challenging to study. Here we wish to start a local analysis of some variable coefficient cases, and prove Carleman type estimates to be employed to reach uniqueness results. Let us underline that such estimates have a pivotal role in establishing uniqueness properties, not only at a local (in space) level, but also for global results. In the context of dispersive equations, results for KdV and ZK (Zakharov-Kuznetsov) equations can be found, for instance, in \cite{CFL},\cite{EKPV2} and \cite{Z}, while for nonlocal operators we refer the interested reader to  \cite{KPPV} and \cite{RSV}.  Note that all these cases deal with constant coefficients operators, while here we are concerned with the situation where the coefficients are variable. As far as the author knows, there are no results in the space variable coefficients setting. Recently, some results on $p$-evolution equations in one space dimension have appeared in \cite{AJAC}. In that work the operators under investigation have time-dependent coefficients in the leading part, and the class only includes KdV operators (since the space domain is one dimensional) with time variable coefficients in the leading part, but not ZK operators. Here we are concerned with space-dependent coefficients in any space dimension.\\
\indent An other motivation driving this work, is that of the local solvability problem for multiple characteristics PDOs. This problem is very hard to attack, and general results often require precise geometrical conditions on the characteristic set. We refer the interested reader to \cite{T} for a nice overview of the classical local solvability problem, and to \cite{D} for the resolution of the Nirenberg-Treves local solvability conjecture.

Let us give a description of the family of operators we will be considering in this paper.

 Given a system of vector fields $\{iX_j(x,D)\}_{j=0}^N$  on  $\mathbb{R}^{n+1}$ – where $N$ is a positive integer not necessarily equal to $n$ or $n+1$, and $D=(D_1,\ldots, D_{n+1})=(-i\partial_{x_1}, \ldots, -i\partial_{x_{n+1}})$ – such that
 
\begin{itemize}
\item the vector fields $iX_j(x,D)$, $j=0,\ldots,N$, are \textit{smooth} and \textit{real};
\item the vector fields $iX_j(x,D)$; $j=1,\ldots,N$, form a global involutive distribution, that is, $[X_l,X_m](x,D)=-[iX_l,iX_m]$ is uniquely determined by a linear combination with smooth coefficients of the vector fields $\{iX_j(x,D)\}_{j=1,\ldots,N}$, for all $l,m=1,\ldots, N,$  and for all $x\in \R^{n+1}$;
\item $X_1$ is nondegenerate, namely it is nowhere vanishing on $\R^{n+1}$;
\end{itemize}
we define $P_1$ as the third order PDO of the form

\begin{align}
P_1(x,D)&:=X_1\sum_{j=1}^N X_j^* X_j+X_0.\label{eq.P1}
\end{align}
The latter is the precise class of operators we will be dealing with,
which, depending on $N$ and on the choice of the system of vector fields, includes (linear) KdV-type operators of different kind. 

We have already mentioned that our goal is to prove Carleman estimates for such class of operators. To be precise, we will not only obtain Carleman estimates for the class of operators represented by $P_1$, but also for the adjoint class, that is the family of operators $P_1^*$ obtained by taking the adjoint of $P_1$, namely
\begin{align}
P_1^*&=\sum_{j=1}^n X_j^* X_jX_1^*+X_0^*,\label{eq.P1s}
\end{align}
which coincides with $P_1$ up to a differential operator of order two. 

The main motivation for the Carleman estimate for $P_1^*$, is our interest in the local solvability of $P_1$.
We recall that the role of Carleman estimates - which is well-known to be determinant in the analysis of unique continuation problems - is also crucial to prove local solvability properties of partial differential equations with variable coefficients. An estimate of this sort for an operator $P$, yields a local solvability result for the adjoint operator $P^*$. Hence, we can take advantage of the Carleman estimate for $P_1^*$ to get the local solvability of $P_1$.

The connection between our operator $P_1$ and some important operators coming from physics, can be seen by the analyzing the following two examples: the (linear) KdV operator and its higher dimensional generalization,  the ZK operator. 
The (linear) KdV operator is described by  $iP_1$ with $N=n=1$, $\R^{n+1}=\R^2_{t,x}$, $iX_0=\partial_t$. and $iX_1=-\partial_{x}$. However, by taking $iX_1(x)=a(x)\partial_x$, with $a(x)\neq0$ for all $x\in \mathbb{R}$,  then we get an operator, again of the form $P_1$, that describes a variable coefficient KdV operator.
If we consider $N=n=2$, $\R^{n+1}=\R_t\times\R^2_{x}$, $iX_0=\partial_t$ and $iX_j=-\partial_{x_j}$, $j=1,2$, then $iP_1$ describes the ZK operator. Again, by taking nonvanishing space-variable coefficient real vector fields satisfying condition $(\mathrm{H}_1)$ in \eqref{involutivity}, we get a variable coefficient ZK model. 

Note that both the classical KdV and the ZK operator (with constant coefficients) are built by taking an elliptic operator in $\mathbb{R}^n_x$, i.e. the Laplacian in $\R$ and $\R^2$ respectively, and then using the two vector fields $iX_0=\partial_t$ and $iX_1=\partial_{x_1}$ to define a third order operator. Observe that $P_1$ is built exactly the same way, but employing variable coefficient real vector fields instead of constant coefficient ones. In fact, to define $P_1$ we take operators being sums of squares, that is $\sum_{j=1}^N X_j^* X_j$ (not necessarily elliptic in a subspace of dimension $n$ of the $n+1$-dimensional domain), and cook up a third order operator by using $X_1$ and $X_0$. 
An example of variable coefficient KdV-type operator in a three-dimensional Euclidean space domain, can be constructed, for instance, by using the canonical basis of the Lie algebra of the Heisenberg group.
Let 
$$iX_0=\partial_t,\quad  iX_1(x,t)=\partial_{x_1}-\frac{x_2}{2}\partial_{x_3},\quad iX_2(x,t)=\partial_{x_2}+\frac{x_1}{2}\partial_{x_3},\quad iX_3(x)=\partial_{x_3},$$
be four real smooth vector fields, where $iX_1, iX_2$ are the generators of the stratified Lie algebra $\mathfrak{h}=\mathfrak{h}_1 \oplus \mathfrak{h}_2$ of $\mathbb{H}^1$.
 Then $\{iX_j\}_{j=1}^3$ is a global involutive distribution in $\R^4_{t,x}$,
 $$\mathcal{L}=\sum_{j=1}^3 X_j^* X_j$$ is an elliptic operator on $\mathbb{R}^3_x$, and
 $$X_{j_0}\sum_{j=1}^3 X_j^* X_j+X_0, \quad j_0=1,2,3,$$
 is a variable coefficient KdV-type operator of the form $P_1$ for every choice of $j_0=1,2,3$, since $X_{j_0}$ is nondegenerate for all $j_0=1,2,3$.

The examples above show that $P_1$-type operators include KdV and ZK variable coefficient PDOs. However, our class is not limited to be just a generalization of KdV-type operators in any space dimension, since we are free to choose $n\geq 3$, $N\leq n+1$, but also $iX_0\neq \partial_t$, giving rise to a very wide family of degenerate operators. 

Let us finally conclude this introduction with the plan of the paper. Section \ref{sec.LocSolvCarlemanEst} is devoted to the the main results of this work, specifically Theorem \ref{thm.Carleman} about a Carleman estimate for $P_1$ and for $P_1^*$,  and Theorem \ref{thm.solv} giving a local solvability  result for both the aforementioned operators. Section \ref{sec.example-applic} is dedicated to explicit examples of operators of the form under study and of KdV-type.
The last section, Section \ref{sec.finalRem-Applic}, contains final remarks and the discussion of possible applications in the context of dispersive equations.

\section{Carleman estimate and local solvability for $P_1$ and $P_1^*$}\label{sec.LocSolvCarlemanEst}
We recall once more that in this section we are concerned with the proofs of the main results of the article: a Carleman estimate for $P_1$ and $P_1^*$, and a local solvability result for these operators. The Carleman estimate is  the most important result of the paper, in that it is the key tool to get the solvability property. Moreover, it can find applications in other problems of our interest, like in uniqueness/unique continuation problems for dispersive equations. On the other hand, the investigation of the local solvability of highly degenerate operators is quite complicated, thus our results represent a step ahead in the understanding of certain kind of operators.

Before stating and proving our theorems, let us make precise the objects we are working with. 
Recall that, on $\mathbb{R}^{n+1}_{x}$, $x=(x_1,\ldots, x_n, x_{n+1})$, $n\geq 1$, we define the operator $P_1$ as
$$P_1=X_{1}\sum_{j=1}^N X_j^* X_j+X_0,\quad 1\leq N\leq n+1,$$
where 
\begin{align}
X_j(x,D_{x})&=\sum_{k=1}^{n+1}a_{jk}(x_1,\ldots,x_{n+1})D_k, \quad \forall j=1,\ldots, N \label{formula1.vs}\\
X_j^*(x,D_{x})&=\sum_{k=1}^{n+1}a_{jk}(x_1,\ldots,x_{n+1})D_k+\mathrm{d}_j(x_1,\ldots,x_{n+1})\\
&=X_j(x,D)+\mathrm{d_j}(x_1,\ldots,x_{n+1}),\quad \mathrm{d_j}(x):=\sum_{k=1}^{n+1}D_k a_{jk}(x_1,\ldots,x_{n+1}), \quad \forall j=1,\ldots, N\label{formula2.vs}
\end{align}
with $D_k=-i\partial_k:=-i\partial_{x_k}$, and $a_{jk}\in C^\infty(\R^{n+1})$, for all $j,k=1,\ldots, n+1$.

The condition we shall assume on the system of vector fields on $\R^{n+1}$ which make up $P_1$ is the following:
\begin{itemize}
\item [($\mathrm{H}_1$)]  We say that a system of real smooth vector fields $\{iX_l\}_{l=1,\ldots, N}$, $1\leq N\leq n+1$, satisfies condition ($\mathrm{H}_1$) if the distribution $\Gamma(\{iX_l\}_{l=1,\ldots, N})$ is a global involutive distribution, that is, for all $j,k=1,\ldots,N$,  there exist (uniquely determined) $c^{jk}_l\in C^\infty(\R^{n+1},i\R), l=1,\ldots,N$, such that
\begin{align}\label{involutivity}
    [X_j,X_k](x,D)=\sum_{l=1}^N c^{jk}_l(x) X_l(x,D), \quad \forall x\in\mathbb{R}^{n+1},
\end{align}
where $[X,Y]:=XY-YX$ denotes the commutator of the operators $X$ and $Y$.
\end{itemize}

In other words $(\mathrm{H}_1)$ amounts to the global involutivity of the system of real smooth vector fields $\{iX_j\}_{j=1}^N$ on $\R^{n+1} $.

\begin{remark}\label{remarkX1}
Due to the global involutivity of the system, and the fact that $N\geq 1$, there exists at least one vector field $iX_{j_0}\in \{iX_j\}_{j=1}^N$ being nondegenerate on $\R^{n+1}$, or, in other words, a nowhere vanishing vector field on $\R^{n+1}$. By possibly renaming the vector fields, we can always assume $j_0=1$, so we will use this convention in the rest of the paper when assuming condition $(\mathrm{H}_1)$ on a system of vector fields. This assumption will be crucial to derive the Carleman estimate below, and thus the solvability result as well.
Note also that, if $N\leq n+1$, and $\Gamma(\{iX_j\})$ is a global involutive distribution of $\mathrm{rank}=N$,  then $X_j$ is nondegenerate in $\R^{n+1}$ for all $j=1,\ldots,N$, hence we are free to take any vector field in the system as  the one playing the role of $X_1$.  
Finally, observe that the requirement $N\leq n+1$ is not restrictive. Due to the global involutivity, and the fact that the rank of the distribution is $M\leq n+1$, then if $N>n+1$ we can rewrite $P_1$ in terms of $M\leq n+1< N$ vector fields in the second order part.
\end{remark}

\begin{remark}\label{remarkH1-2}
 Notice that formula \eqref{involutivity} holds true independently of the rank of the distribution. If the system of vector fields satisfies $(\mathrm{H}_1)$, and the rank of $\Gamma(\{iX_l\}_{l=1,\ldots, N})=M\leq N\leq n +1$, then 
 \begin{align}
   [X_j,X_k](x,D)=\sum_{l=1}^M c^{jk}_l (x)X_l(x,D)=\sum_{l=1}^N c^{jk}_l(x) X_l(x,D),  
 \end{align}
 where some coefficients will be identically zero. Since we will not care about the precise rank of the distribution, we will simply use \eqref{involutivity}.
\end{remark}

\begin{remark}\label{remarkH1-2}
An additional remark is that, by imposing only ($\mathrm{H}_1$) on the system of vector fields, then $P_1$ generates a class of operators larger than the one of KdV-type operators, in that the latter is modeled by using a system of vector fields satisfying the additional conditions 
\begin{itemize}
\item $X_0(x,D)\neq 0, \quad \forall x\in\R^{n+1},$
    \item $[X_j,X_0]\equiv 0, \quad \forall j=1,\ldots,N$,
    \item $X_j(x,D)=X_j(x_1,\ldots,x_n, D_1,\ldots, D_n), \quad \forall j=1,\ldots,N,$
    \item $N=n$ and $\sum_{j=1}^n X_j^*X_j$ is elliptic on $\R^n_{(x_1,\ldots,x_n)}$.
\end{itemize}

\end{remark}

Since we will be giving a local solvability result for $P_1$ and $P_1^*$, for completeness we recall here the definition of $H^s-H^{s'}$ locally solvable partial differential operator at a point $x_0\in\Omega$, where, adopting the usual notation, $H^s(\Omega)$ stands for the Sobolev space of order $s$.

\begin{definition}
    Let $P$ be a partial differential operator with smooth coefficients defined on an open subset $\Omega$ of $\R^n$, and let $x_0\in\Omega$. We say that $P$ is $H^s-H^{s'}$ locally solvable at $x_0$ if there exists a compact $K$ containing $x_0$ in its interior $U_K$, such that, for every $f\in H^{-s}_{loc}(\Omega)$ there exists $u\in H^{-s'}_{loc}(\Omega)$ for which $Pu=f$ in $U_K$. 
We say that $P$ is $H^s-H^{s'}$ locally solvable in $\Omega$, if it is locally solvable at each point of $\Omega$.

\end{definition}

When $P$ is $H^s-H^{s'}$ locally solvable at $x_0$ with $s=s'=0$, we will just say that the operator is $L^2-L^2$ locally solvable at $x_0$.

\noindent\textbf{Notations.} Below $N$ will always be a positive integer, $\lambda$ will be a real number, whereas $A=\{a_{ji}(x)\}_{\substack{j=0,\ldots,N\\ i=1,\dots,n+1}}$ will denote the variable coefficient matrix containing the coefficients of the vector fields $iX_j$, for $j=0,\ldots,N$. 
 For a matrix $A$ as above and $\alpha\in \mathbb{N}^{n+1}\bigcup \{0\}^{n+1}$, we define
\begin{align}
    \|\nabla^\alpha A\|_{L^\infty(\Omega)}:=\sup_{\substack{j=1,\ldots,N\\ i=1,\dots,n+1}}\|\partial^\alpha a_{ji}\|_{L^\infty(\Omega)},\quad \forall \Omega\subset \mathbb{R}^{n+1},
\end{align}
where $\partial^\alpha f=\partial_1^{\alpha_1}\ldots \partial_{n+1}^{\alpha_{n+1}}$
Throughout the paper we will  write $\|\cdot\|_{L^\infty}$, $\|\cdot\|_{L^2}$, $\|\cdot\|_{H^s} $ without specifying the set $\Omega$ where the $L^\infty(\Omega)$, $L^2(\Omega)$, and the $H^s(\Omega)$ norms are taken. Given that we will be working with compactly supported functions, the set $\Omega$ will be the fixed support of the function (or its interior), so we omit it for simplicity. However, note that we can take $\|\nabla^\alpha A\|_{L^\infty}=\|\nabla^\alpha A\|_{L^\infty(\mathbb{R}^{n+1})}$ in the coming estimates when $A$ has $C^\infty_b(\mathbb{R}^{n+1})$ coefficients.
Finally, we shall use the notation $\Gamma(\{X_j\}_{j=1}^N)$ for the distribution spanned by the system of vector fields $\{X_j\}_{j=1}^N$.

In order to prove our Carleman inequality for $P_1$ and for $P_1^*$, we will make use of the subsequent fundamental lemma.

\begin{lemma}\label{lemma}
    Let $f\in C^\infty(\mathbb{R}^{n+1})$, $N\geq 1$, and $\lambda\geq 1$. Let also $P_1$ be as in \eqref{eq.P1} and $\{iX_j\}_{j=1}^N$ satisfying  $(\mathrm{H}_1)$.
    Then, for every compact set $K$ of $\mathbb{R}^{n+1}$, there exists a positive constant $C_K=C_K\left(\{\|\partial^\alpha f\|_{L^\infty(K)}\}_{|\alpha|=0,1,2},\right.$ $\left. \{\|\nabla^\alpha A\|_{L^\infty (K)}\}_{|\alpha|=0,1,2}\right)$ such that, for all $u\in C_0^\infty(K),$
\begin{align}
    &\im(e^{-\lambda f}P_1u, e^{-\lambda f}u)\\
    & \geq \small{\sum_{j=1}^N}(Q_j\ef,\ef)
    +\la^3 \sum_{j=1}^N((-iX_1f)|X_jf|^2\ef,\ef)-\lambda^2 C_K\|\ef\|_{L^2}^2,
\end{align}
where
\begin{align}
    Q_j=Q_j(x,D):=&\left(\la(-iX_1f)+id_1/2\right)X_j^*X_j+
   \sum_{k=1}^Nic^{j1}_k X^*_kX_j \\
   &-i\la (X_jX_1f) X_j+i/2\Big(c'_j-(X_1d_j)+2(X_jd_1)+d_1d_j\Big) X_j,\label{eqn.thm.Qj}
\end{align}
    with $c^{j1}_k,$ as in \eqref{involutivity}, and $ c'_j:=\sum_{k=1}^N(\sum_{l=1}^Nc^{lk}_j+(X_kc_j^{k1})+d_kc_j^{k1}-2d_kc_k^{j1})\in C^\infty (\R^{n+1},i\R)$ for all $j,k=1,\ldots, N$.
\end{lemma}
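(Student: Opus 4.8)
The plan is to conjugate $P_1$ by the Carleman weight and read off the imaginary part of the resulting sesquilinear form, organizing everything by powers of $\la$. Set $v:=\ef$, so that $u\in C_0^\infty(K)$ gives $v\in C_0^\infty(K)$ and $\im(e^{\la f}P_1u,e^{\la f}u)=\im(P_fv,v)$, where $P_f:=e^{\la f}P_1e^{-\la f}$ is the conjugated operator. Since $e^{\la f}D_ke^{-\la f}=D_k+i\la\p_k f$ and the $iX_j$ are real, each vector field conjugates as $e^{\la f}X_je^{-\la f}=X_j+i\la g_j$ with $g_j:=iX_jf=\sum_k a_{jk}\p_k f$ real (and likewise for $X_j^*$, $X_0$, $X_1$, the zeroth order multiplications $d_j$ being unaffected). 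Writing $\widetilde X_l:=X_l+i\la g_l$ and $\widehat X_j^*:=X_j^*+i\la g_j$, substituting into \eqref{eq.P1} and expanding $\widehat X_j^*\widetilde X_j=(X_j^*+i\la g_j)(X_j+i\la g_j)$ and then $\widetilde X_1(\cdot)$, I obtain a polynomial $P_f=P_f^{(0)}+\la P_f^{(1)}+\la^2P_f^{(2)}+\la^3P_f^{(3)}$, whose terms have respective differential orders $3,2,1,0$.

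Next I would compute $\im(P_fv,v)=\big((P_f-P_f^*)/(2i)\,v,v\big)$ one order at a time. The top coefficient $P_f^{(3)}=-ig_1\sum_j g_j^2$ is a purely imaginary multiplication operator, so it contributes directly $\la^3\sum_j((-iX_1f)|X_jf|^2v,v)$, using $-iX_1f=-g_1$ and $|X_jf|^2=g_j^2$; this is exactly the displayed $\la^3$ term. The coefficient $P_f^{(2)}$ is first order with real principal part; taking its antisymmetric part (equivalently, integrating by parts once and using the Leibniz rule $X_l(hw)=(X_lh)w+hX_lw$) cancels the top order and leaves an order zero multiplication operator of size $\la^2$, which I bound by $\la^2C_K\norm{v}_{L^2}^2$ via the $L^\infty(K)$ bounds on $f$ and $A$ up to two derivatives. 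This accounts for the error $-\la^2C_K\norm{v}_{L^2}^2$.

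The heart of the matter is the contribution of $\la P_f^{(1)}$ (order two) and $P_f^{(0)}=X_1\sum_j X_j^*X_j+X_0$ (order three). The antisymmetric part of the leading, order three piece is genuinely order two, and to compute it I would repeatedly commute $X_1$ through $X_j^*X_j$: every commutator $[X_1,X_j]$ and $[X_j,X_k]$ is rewritten, via the involutivity hypothesis $(\mathrm{H}_1)$ in \eqref{involutivity}, as $\sum_l c_l^{jk}X_l$, and the relation $X_j^*=X_j+d_j$ is used to move adjoints across factors. Collecting the surviving order two and order one operators, together with the order two and order one pieces coming from $\la P_f^{(1)}$ (which carry the factors $\la(-iX_1f)$ in front of $X_j^*X_j$ and $-i\la(X_jX_1f)$ in front of $X_j$), yields precisely the operators $Q_j$ in \eqref{eqn.thm.Qj}; the scalar $c_j'$ is exactly the aggregate of the commutator coefficients $c_l^{jk}$ and of the derivatives $X_kc_j^{k1}$, $X_1d_j$, $X_jd_1$ and products $d_kc_j^{k1}$, $d_1d_j$ generated in this reorganization. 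The remaining zeroth order terms of size at most $\la^2$, in particular all contributions of $X_0$ and $g_0$ (which are $O(\la)$), are absorbed into $-\la^2C_K\norm{v}_{L^2}^2$.

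The main obstacle is this last step: keeping track, without error, of the order one and order zero operators produced when the antisymmetric part of the third order operator $X_1\sum_j X_j^*X_j$ is computed. Each pass of commutation or integration by parts generates both a genuine $Q_j$-type term and lower order remainders involving $X_lf$, $X_ld_j$, $X_lc_m^{jk}$ and products $d_ld_m$, and verifying that these assemble into exactly the stated $Q_j$ and $c_j'$, with all truly negligible pieces of order zero and size $\la^2$, is the delicate bookkeeping on which the whole estimate rests. Once the resulting identity is established, the claimed inequality follows at once by replacing the order zero remainder with its lower bound $-\la^2C_K\norm{v}_{L^2}^2$.
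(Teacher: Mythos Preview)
Your proposal is correct and follows essentially the same strategy as the paper: conjugate $P_1$ by $e^{\la f}$, expand, take the antisymmetric part, and use the involutivity hypothesis $(\mathrm{H}_1)$ to rewrite all commutators $[X_1,X_j]$ in terms of the $X_k$, collecting the surviving second and first order pieces into $Q_j$ and absorbing the order zero remainders into $-\la^2C_K\|v\|_{L^2}^2$. The only organizational difference is that you split the conjugated operator by powers of $\la$, $P_f=P_f^{(0)}+\la P_f^{(1)}+\la^2P_f^{(2)}+\la^3P_f^{(3)}$, whereas the paper splits it by the differential order of the perturbation, writing $P_1^f=P_1+L_2+L_1+L_0$ with $L_k$ of order $k$; since each $L_k$ carries several powers of $\la$ this regroups the same terms differently, but the computations (and in particular the crucial identity $2i\,\im(P_1v,v)=\sum_j((X_1X_j^*X_j-X_j^*X_jX_1^*)v,v)+\cdots$ handled via $(\mathrm{H}_1)$) are identical.
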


\begin{proof}

We start the proof by computing the quantity
    \begin{align}
        \mathrm{Im}(e^{-\la f}P_1u,e^{-\la f}u).\label{eqm.Car.001}
    \end{align}
Writing
\begin{align}
    e^{-\lambda f}P_1(x,D)u=e^{-\lambda f}P_1(x,D)e^{\lambda f}\ef= P^f_1(x,D)\ef,
\end{align}
then
\begin{align}
    P_1^f(x,D)&=P_1(x,D+\lambda Df)=
    (X_1+\la (X_1f))\sum_{j=1}^N(X_j+\la (X_jf))^*(X_j+\la (X_jf))+X_0+\la (X_0f)\\
    &=(X_1+\la (X_1f))\sum_{j=1}^N(X_j^*-\la (X_jf))(X_j+\la (X_jf))+X_0+\la (X_0f)\\
  & = (X_1+\la (X_1f))\sum_{j=1}^N\left[ X_j^*X_j+\la X_j^*(X_jf)-\la (X_jf)X_j-\lambda^2 (X_jf)^2\right]+X_0+\la (X_0f)\\
   & = (X_1+\la (X_1f))\sum_{j=1}^N\left[ X_j^*X_j+\la d_j(X_jf)+\la (X_j^2f)-\lambda^2 (X_jf)^2\right]+X_0+\la (X_0f)\\
   & = P_1 +X_1\sum_{j=1}^N\left[ \la d_j(X_jf)+\la (X_j^2f)-\lambda^2 (X_jf)^2\right]+\\
  &\quad  +\la (X_1f)\sum_{j=1}^N\left[ X_j^*X_j+\la d_j(X_jf)+\la (X_j^2f)-\lambda^2 (X_jf)^2\right]+\la (X_0f)\\
  & = P_1 +\sum_{j=1}^N\left[ \la d_j(X_jf)+\la (X_j^2f)-\lambda^2 (X_jf)^2\right]X_1+\\
  &\quad +\sum_{j=1}^N\left[ \la (X_1d_j)(X_jf)+\la d_j(X_1 X_jf)+\la (X_1X_j^2f)-2\lambda^2  (X_jf)(X_1X_jf)\right]\\
  & \quad +\la (X_1f)\sum_{j=1}^NX_j^*X_j+  (X_1f)\sum_{j=1}^N\left[\la^2 d_j(X_jf)+\la^2 (X_j^2f)+\lambda^3 (iX_jf)^2\right]+\la (X_0f)\\
  &=P_1+L_2+L_1+L_0,
\end{align}
where $L_2,L_1,L_0$ are the partial differential differential operators of order 2, 1, and 0, given,  respectively,  by
\begin{align}
    L_2&=L_2(x,D):=\la (X_1f)\sum_{j=1}^NX_j^*X_j\\
    L_1&=L_1(x,D):=\sum_{j=1}^N\left[ \la d_j(X_jf)+\la (X_j^2f)-\lambda^2 (X_jf)^2\right]X_1\\
    L_0&=L_0(x):=
    \sum_{j=1}^N\left[\la (X_1d_j)(X_jf)+\la d_j(X_1 X_jf)+\la (X_1X_j^2f)-2\lambda^2  (X_jf)(X_1X_jf)+\right.\\
    &\left.\quad +(X_1f)(\la^2 d_j(X_jf)+\la^2 (X_j^2f)+\lambda^3 (iX_jf)^2)\right]+\la (X_0f)
\end{align}
By the calculations above 
\begin{align}
        \mathrm{Im}(e^{-\la f}P_1u,e^{-\la f}u)&=\mathrm{Im}(P_1\ef,e^{-\la f}u)+ \mathrm{Im}(L_2\ef,e^{-\la f}u)+\mathrm{Im}(L_1\ef,e^{-\la f}u)+\mathrm{Im}(L_0\ef,e^{-\la f}u),
    \end{align}
so, for simplicity, we handle each term on the right-hand side separately.

Recall that, since the system of vector fields $\{iX_j\}_{j=1,\ldots,N}$ is globally involutive, that is satisfies $(\mathrm{H}_1)$, for all $j=1,\ldots, N$, there exist $c^{j1}_k\in C^\infty(\R^{n+1},i\R), k=1,\ldots,N$,  such that
\begin{align}
    [X_j,X_1]=\sum_{k=1}^N c^{j1}_kX_k.
\end{align}
By the same token, there exist  some $c^{jk}_l\in C^\infty(\R^{n+1},i\R), l=1,\ldots,N$, such that
\begin{align}
    [X_j,[X_j,X_1]]=\sum_{k,l=1}^N c^{j1}_kc_l^{jk}X_l.
\end{align}
Now, on putting $v:=\ef$, and using the previous observation, we have
\begin{align}
    2i\mathrm{Im}(P_1\ef,\ef)&=2i\sum_{j=1}^N \mathrm{Im}(X_1X_j^*X_jv,v)+2i\,\mathrm{Im}(X_0v,v)\\
    &=\sum_{j=1}^N \left( (X_1X_j^*X_j-X_j^*X_jX_1-X_j^*X_jd_1)v,v\right)+\left((X_0-X_0^*)v,v\right)\\
    &=-\sum_{j=1}^N \left( (2[X_j,X_1]X_j+ [X_j,[X_j,X_1]]+ d_j[X_j,X_1]-(X_1d_j)X_j)v,v\right)\\
    &-\sum_{j=1}^N\Bigg( \Big( d_1X_j^*X_j+2(X_jd_1)X_j+ d_1d_jX_j+ (X_j^*X_jd_1) +(X_jd_1)d_j\Big)v,v\Bigg)-(d_0v,v)\\
    &=-\sum_{j,k=1}^N (2c^{j1}_k X_kX_j v,v)
    -d_1\sum_{j=1}^NX_j^*X_j
    -\sum_{j,k,l=1}^N (c^{jk}_l X_lv,v)-\sum_{j,k=1}^N\left((X_jc_k^{j1})X_kv,v\right)\\
    &-\sum_{j,k=1}^N (d_jc_k^{j1} X_kv,v)
-\sum_{j=1}^n\Bigg( \Big( -(X_1d_j)+2(X_jd_1)+d_1d_j  \Big)X_j v,v\Bigg)\\
&-\sum_{j=1}^N\Bigg( \big( (X_j^*X_jd_1) +(X_jd_1)d_j\big)\,v,v\Bigg)-(d_0v,v).\\
\end{align}
Adopting the notation $\|\cdot\|_{L^\infty}:=\|\cdot \|_{L^\infty(K)}$, we get
\begin{align}
    \mathrm{Im}(P_1 v,v)
    &=\sum_{j,k=1}^N (ic^{j1}_k X_kX_j v,v)+ \frac{i}{2}d_1\sum_{j=1}^N(X_j^*X_jv,v)\\
    &+ \frac{i}{2}\sum_{j=1}^N(\sum_{k=1}^N(\sum_{l=1}^Nc^{lk}_j+(X_kc_j^{k1})+d_kc_j^{k1}) X_jv,v)
+ \frac{i}{2}\sum_{j=1}^n\left( \Big( -(X_1d_j)+2(X_jd_1)+d_1d_j  \Big)X_j v,v\right)\\
&+ \frac{i}{2}\sum_{j=1}^N\left( ( (X_j^*X_jd_1) +(X_jd_1)d_j)\,v,v\right)+ \frac{i}{2}(d_0v,v) \label{estP1}\\
&\geq \sum_{j=1}^N \Bigg( \Big(\sum_{k=1}^Nic^{j1}_k X_k^*X_j+\frac i 2 d_1X_j^*X_j+i/2\Big(c'_j-(X_1d_j)+2(X_jd_1)+d_1d_j\Big) X_j\Big)v,v\Bigg)-\!C_K\|v\|_{L^2}^2,\end{align}
where $c'_j(x):=\sum_{k=1}^N(\sum_{l=1}^Nc^{lk}_j+(X_kc_j^{k1})+d_kc_j^{k1}-2d_kc_k^{j1})(x) $,  while $C_K$ denotes a constant depending on the $L^\infty(K)$-norm of some smooth functions, specifically functions depending on the coefficients of the vector fields $iX_j$ and on their derivatives. Abusing of the notation, below we shall write $C_K$ for any constant of this form, that is any constant depending on the $L^\infty(K)$-norms of $f$, of the coefficients of the vector fields $iX_j,j=0,\ldots,N$, and of the derivatives of these functions. Sometimes $C_K$ will also depend on the $L^\infty(K)$-norm of $c_j^{kl}$, $k,l=1,\ldots, N$, $j=\,\ldots,n+1$.

As for  the term containing $L_2$, recalling that $f$ is a real function and that $X_1f$ takes purely imaginary values, we obtain
\begin{align}
    \im(L_2v,v)&=\im(\la (X_1f)\sum_{j=1}^NX_j^*X_j v,v)\\
    &\frac{-i}{2}\la \sum_{j=1}^N( ((X_1f)X_j^*X_j-X_j^*X_j(-X_1f)) v,v)\\
    &=\la\sum_{j=1}^N\left( \left((-iX_1f)X_j^*X_j v,v\right)-i \left((X_jX_1f) X_j v,v\right)-\frac i 2\left (( (X_j^2X_1f)+(X_jX_1f)+d_j(X_jX_1f)) v,v\right ) \right)\\
    &\geq \la\sum_{j=1}^N\left((-iX_1f)X_j^*X_j v,v)-i ((X_jX_1f) X_j v,v\right)-C_K \|v\|_{L^2}^2\label{estL2}.
\end{align}

By similar considerations 
\begin{align}
    \im(L_1v,v)=&\frac{1}{2i}\sum_{j=1}^n\big\{
   \la \left((d_1d_j(X_jf)-d_1(X_j^2f)+(X_1d_j(X_jf)) -(X_1X_j^2f))v,v \right) 
   \\ &+\la^2 \left(( d_1(X_jf)^2+2(X_jf)(X_1X_jf) )v,v\right)\big\}\\
   \geq& -\la^2 C_K\|v\|_{L^2}^2,\label{estL1}
\end{align}
and
\begin{align}
    \im(L_0v,v)\geq& \la^3\sum_{j=1}^N((-iX_1f)|X_jf|^2v,v)+\lambda (-i(X_0f)v,v)-\lambda^2 C_K\|v\|_{L^2}^2\\
    \geq& \la^3 \sum_{j=1}^N((-iX_1f)|X_jf|^2v,v)-\lambda^2 C_K\|v\|_{L^2}^2.\label{estL0}
\end{align}

Finally, by \eqref{estP1}, \eqref{estL2}, \eqref{estL1}, \eqref{estL0}, and the fact that $\la\geq 1$, we can find a new suitable positive constant $C_K$ such that
\begin{align}
    \im(e^{-\lambda f}P_1u,\ef)\geq \sum_{j=1}^N (Q_j v,v)+\la^3 \sum_{j=1}^N((-iX_1f)|X_jf|^2v,v)-\lambda^2 C_K\|v\|_{L^2}^2,\quad \forall u\in C_0^\infty(K),
\end{align}
with
\begin{align}
    Q_j=Q_j(x,D):=&(\la(-iX_1f)+id_1/2)X_j^*X_j+
   \sum_{k=1}^Nic^{j1}_k X_k^*X_j \\
   &-i\la (X_jX_1f) X_j+i/2\Big(c'_j-(X_1d_j)+2(X_jd_1)+d_1d_j\Big) X_j.
\end{align}
This completes the proof.
\end{proof}

The same result as Lemma \ref{lemma}, with suitable small adjustments, is still valid for the operator $P_1^*$, the adjoint of $P_1$. We state the precise result in Lemma \ref{lemma.P1adjoint} below.

\begin{lemma}\label{lemma.P1adjoint}
 Let $f\in C^\infty(\mathbb{R}^{n+1})$, $N\geq 1$, and $\lambda\geq 1$. Let also $P_1^*$ be as in \eqref{eq.P1s}, and $\{iX_j\}_{j=1}^N$ satisfying  $(\mathrm{H}_1)$.
    Then, for every compact set $K$ of $\mathbb{R}^{n+1}$, there exists a positive constant  $C_K=C_K\left(\{\|\partial^\alpha f\|_{L^\infty(K)}\}_{|\alpha|=0,1,2},\right.$ $\left. \{\|\nabla^\alpha A\|_{L^\infty (K)}\}_{|\alpha|=0,1,2}\right)$ such that, for all $u\in C_0^\infty(K),$
\begin{align}
    &\im(e^{-\lambda f}P_1^*u, e^{-\lambda f}u)\\
    & \geq \small{\sum_{j=1}^N}(Q'_j\ef,\ef)
    +\la^3 \sum_{j=1}^N((-iX_1f)|X_jf|^2\ef,\ef)-\lambda^2 C_K\|\ef\|_{L^2}^2
\end{align}
where
\begin{align}
       Q'_j(x,D)=&Q_j(x,D)-i\sum_{k=1}^N c_k^{j1}(X_j^*X_k+X_k^*X_j)-ic''_jX_j\\
        =&(\la(-iX_1f)+id_1/2)X_j^*X_j-
   \sum_{k=1}^Nic^{j1}_k X_j^*X_k \\
   &-i\la (X_jX_1f) X_j+i/2\Big(c'_j-c''_j-(X_1d_j)+2(X_jd_1)+d_1d_j\Big) X_j\label{eqn.thm.Qjp}
    \end{align}
 with $c^{j1}_k,$ as in \eqref{involutivity}, $ c''_j:=\sum_{k=1}^N\Big( (X_kc_k^{j1})+(X_k^*c_k^{j1}) +(X_kc_j^{k1})+(X_k^*c_j^{k1})\Big)\in C^\infty (\R^{n+1},i\R)$ for all $j,k=1,\ldots, N$, and $Q_j$, $c_j'$, as in \eqref{eqn.thm.Qj} for all $j=1,\ldots,N$.
\end{lemma}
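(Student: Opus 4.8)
The plan is to deduce Lemma~\ref{lemma.P1adjoint} from the already-established Lemma~\ref{lemma} by exploiting the relation between $P_1$ and $P_1^*$. Since $P_1^*$ coincides with $P_1$ up to a second order differential operator, the first step is to compute the difference $P_1^*-P_1$ explicitly in terms of the vector fields $X_j$, the functions $d_j$, and the structure constants $c^{jk}_l$. Concretely, starting from $P_1=X_1\sum_{j=1}^N X_j^*X_j+X_0$ and $P_1^*=\sum_{j=1}^N X_j^*X_jX_1^*+X_0^*$, I would write $P_1^*-P_1=\sum_{j=1}^N\big(X_j^*X_jX_1^*-X_1X_j^*X_j\big)+(X_0^*-X_0)$ and push all commutators through using $(\mathrm{H}_1)$, i.e.\ $[X_j,X_1]=\sum_k c^{j1}_k X_k$, together with $X_j^*=X_j+d_j$ and $X_1^*=X_1+d_1$. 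The commutators $[X_j^*X_j,X_1^*]$ will generate exactly the correction terms appearing in $Q'_j-Q_j$, namely a piece $-i\sum_k c^{j1}_k(X_j^*X_k+X_k^*X_j)$ of order two and a first order piece $-ic''_jX_j$ whose coefficient collects the derivatives $(X_kc_k^{j1})$, $(X_k^*c_k^{j1})$, $(X_kc_j^{k1})$, $(X_k^*c_j^{k1})$ that make up $c''_j$.

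\medskip
\noindent Next, I would run the conjugation exactly as in the proof of Lemma~\ref{lemma}: set $v:=\ef$, write $e^{\lambda f}P_1^*u=P_1^{*,f}v$ with $P_1^{*,f}(x,D)=P_1^*(x,D+\lambda Df)$, and expand. Because $P_1^*-P_1$ is of order two and its principal part is built from the same $X_j$'s, the conjugated operator differs from $P_1^f$ only by terms that are either lower order in $\lambda$ or absorbable. The key point is that taking imaginary parts $\im(P_1^{*,f}v,v)=\im(P_1^f v,v)+\im((P_1^{*,f}-P_1^f)v,v)$ reduces everything to controlling the extra contribution $\im((P_1^*-P_1)^f v,v)$. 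The cubic-in-$\lambda$ term $\la^3\sum_j((-iX_1f)|X_jf|^2v,v)$ comes solely from the third order part and is therefore identical for $P_1$ and $P_1^*$, so it carries over unchanged; only the $Q_j$ operator gets modified into $Q'_j$, and the $\la^2 C_K\|v\|_{L^2}^2$ error term is enlarged at most by another constant of the same type.

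\medskip
\noindent The main obstacle I anticipate is bookkeeping rather than conceptual: correctly tracking the self-adjointness relations when integrating by parts, since the second order correction $-i\sum_k c^{j1}_k(X_j^*X_k+X_k^*X_j)$ must emerge in a manifestly symmetric-looking combination so that its contribution to the imaginary part is clean, and the first order remainder must be organized so that all the surviving zeroth order coefficients (derivatives of the structure constants and of the $d_j$) collapse precisely into $c''_j$. In particular one must verify that the commutator $[X_j,X_k]=\sum_l c^{jk}_l X_l$ and the derivatives $X_k^*c_j^{k1}=X_k c_j^{k1}+d_k c_j^{k1}$ are applied consistently, and that the purely imaginary character of the $c^{jk}_l$ (guaranteed by $(\mathrm{H}_1)$, as $c^{jk}_l\in C^\infty(\R^{n+1},i\R)$) keeps the sign conventions in the imaginary part correct. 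Once the difference operator is reduced to the stated form, matching it against \eqref{eqn.thm.Qj} yields \eqref{eqn.thm.Qjp}, and the inequality follows verbatim from Lemma~\ref{lemma} with the understanding that $C_K$ now also depends on the $L^\infty(K)$-norms of the $c^{jk}_l$ and their first derivatives.
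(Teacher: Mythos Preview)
Your plan is correct and follows essentially the same route as the paper: compute $P_1^*-P_1$ explicitly via $X_j^*=X_j+d_j$ and the involutivity relations, conjugate by $e^{\lambda f}$, invoke Lemma~\ref{lemma} for the $P_1^f$ part, and then take imaginary parts of the remaining second and first order corrections to extract the $-i\sum_k c^{j1}_k(X_j^*X_k+X_k^*X_j)$ and $-ic''_jX_j$ modifications of $Q_j$, absorbing all zeroth order debris into $\lambda^2 C_K\|v\|_{L^2}^2$. The paper carries out exactly this computation, so your outline matches it.
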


\begin{proof}
To prove this lemma we make use of Lemma \ref{lemma}. By standard arguments we have 
\begin{align}
    P_1^*(x,D)=&P_1(X,D)+d_1\sum_{j=1}^nX_j^*X_j+\sum_{j,k=1}^N\left\{c^{j1}_{k}(X_j^*X_k+X_k^*X_j) -c^{j1}_{k}d_kX_j+(X_k^*c_j^{k1})X_j)\right\} \\
&-\sum_{j=1}^N(X_1d_j)X_j+\sum_{j=1}^N((X_jd_1)d_j+(X_j^*X_1d_1))+d_0,
\end{align}
where $c^{j1}_{k}\in C^\infty(\R^{n+1},i\R)$ are such that 
\begin{align}
    [X_j,X_1]=\sum_{k=1}^N c^{j1}_{k}X_k.
\end{align}
Next, as in the proof of Lemma \ref{lemma}, we compute
\begin{align}
    \im(e^{-\la f}P_1^*u,\ef)&= \im({P_1^*}^f\ef ,\ef),
\end{align}
where
\begin{align}
    {P_1^*}^f =&e^{-\la f}P_1^* e^{\la f}=P_1^*(x,D+\la Df)\\
    =&P_1^f+d_1\sum_{j=1}^nX_j^*X_j+\sum_{j,k=1}^N\left\{c^{j1}_{k}(X_j^*X_k+X_k^*X_j) +(-c^{j1}_{k}d_k+(X_k^*c_j^{k1}))X_j\right\} -\sum_{j=1}^N(X_1d_j)X_j\\
    &+\sum_{j=1}^N((X_jd_1)d_j+(X_j^*X_1d_1))+d_0+d_1\sum_{j=1}^N\left[ \la d_j(X_jf)+\la (X_j^2f)-\lambda^2 (X_jf)^2\right]\\
    &+\sum_{j,k=1}^Nc_k^{j1}\left\{  \la (X_kf)(X_j^*-X_j)+\la(X_jf)(X_k^*-X_k)+\la (X_j^*X_kf) +\la (X_k^*X_jf)-2\la^2(X_jf)(X_kf)\right\}\\
    &-\la \sum_{j=1}^N\left\{\sum_{k=1}^N(c^{j1}_kd_k-(X_k^*c_j^{k1}))(X_jf)+(X_1d_j)(X_jf)\right\}.\label{eq.P1stf}
\end{align}
    Using the same notations as in the proof of Lemma \ref{lemma}, we have
    \begin{align}
    \im({P_1^*}^fv ,v)\geq &\im(P_1^f v,v)
    +\sum_{j,k=1}^N\im\left(\left(c^{j1}_{k}(X_j^*X_k+X_k^*X_j) -(c^{j1}_{k}d_k-(X_k^*c_j^{k1}))X_j\right)v,v\right)\\
    &-\sum_{j=1}^N\im((X_1d_j)X_jv,v)
    -\la^2 C_K \|v\|_{L^2}^2 \\
    \geq & \sum_{j=1}^N(Q_jv,v)+\la^3 \|\, |X_1f|^{\frac 3 2}v\|_{L^2}^2+\sum_{j,k=1}^N\im\left(c^{j1}_{k}(X_j^*X_k+X_k^*X_j) v,v\right)\\
    &-\sum_{j=1}^N \im\left(\sum_{k=1}^N(c^{j1}_{k}d_k-(X_k^*c_{j}^{k1}))X_jv,v\right)
    -\sum_{j=1}^N\im((X_1d_j)X_jv,v)-\lambda^2 C_K\|v\|_{L^2}^2,\label{pf.eq.1}
    \end{align}
    where $Q_j$ is as in \eqref{eqn.thm.Qj}.

Next, we define $g_j(x):=\sum_{k=1}^N(c^{j1}_{k}d_k-(X_k^*c_{j}^{k1}))$, and have that, since $g_j$ and $(X_1d_j)$ are real valued functions, and since the $c_k^{j1}$ take purely imaginary values, then
\begin{align}\label{eqn27121603}
    \im\left(g_j X_jv,v\right)=-\frac{1}{2i}\left(((X_jg_j)+(d_jg_j))v,v\right)\geq -C_K\|v\|^2_{L^2},
\end{align}
\begin{align}\label{eqn27121602}
    \im((X_1d_j)X_jv,v)=-\frac{1}{2i}\left(((X_jX_1d_j)+(X_1d_j)d_j)v,v \right)\geq -C_K\|v\|^2_{L^2},
\end{align}
and
\begin{align}
    \im\left(c^{j1}_{k}(X_j^*X_k+X_k^*X_j) v,v\right)=& (-ic_k^{j1}(X_j^*X_k+X_k^*X_j)v,v)\\
    &-\frac i 2 \left( \Bigg( \Big ((X_jc_k^{j1})+(X_j^*c_k^{j1})\Big)X_k+ \Big ((X_kc_k^{j1})+(X_k^*c_k^{j1})\Big)X_j\Bigg) v,v\right)\\
    &-\frac i 2\left(\Big((X_jc_k^{j1})d_k +(X_kc_k^{j1})d_j +(X^*_kX_jc_k^{j1}+ X_j^*X_kc_k^{j1})\Big)v,v\right),
\end{align}
which gives
\begin{align}
  \sum_{j,k=1}^N   \im\left(c^{j1}_{k}(X_j^*X_k+X_k^*X_j) v,v\right)\geq & \sum_{j,k=1}^N(-ic_k^{j1}(X_j^*X_k+X_k^*X_j)v,v)-\frac i 2\sum_{j=1}^N (c''_j X_j v,v)-C_K\|v\|_{L^2}^2\label{eqn27121601}
\end{align}
where
\begin{align}
    c''_j:=\sum_{k=1}^N\Big( (X_kc_k^{j1})+(X_k^*c_k^{j1}) +(X_kc_j^{k1})+(X_k^*c_j^{k1})\Big).
\end{align}
    Inserting the last three inequalities, that is  \eqref{eqn27121601}, \eqref{eqn27121602} and  \eqref{eqn27121603}, into \eqref{pf.eq.1},  and using that $\lambda\geq 1$, by rearranging the indices we conclude
    \begin{align}
     \im({P_1^*}^fv ,v)\geq   \sum_{j=1}^N(Q'_jv,v)+\la^3 \|\, |X_1f|^{\frac 3 2}v\|_{L^2}^2-\lambda^2 C_K\|v\|_{L^2}^2,
    \end{align}
    with
    \begin{align}
        Q'_j(x,D)=&Q_j(x,D)-i\sum_{k=1}^N c_k^{j1}(X_j^*X_k+X_k^*X_j)-ic''_jX_j\\
        =&(\la(-iX_1f)+id_1/2)X_j^*X_j-
   \sum_{k=1}^Nic^{j1}_k X_j^*X_k \\
   &-i\la (X_jX_1f) X_j+i/2\Big(c'_j-c''_j-(X_1d_j)+2(X_jd_1)+d_1d_j\Big) X_j, \label{eqn29121}\\
    \end{align}
    that is $Q_j'$ differs from $Q_j$ in the coefficients of the second term in \eqref{eqn29121}, and in the appearance of the function $c'_j-c''_j$ instead of $c'_j$ in the last term in \eqref{eqn29121}. This completes the proof.
\end{proof}
\begin{remark}
    The estimates proved so far do not necessitate of the nondegeneracy of $X_1$. The  global nonvanishing requirement on $X_1$ will come into play in the subsequent results.
\end{remark}
In the next theorem we will refer to the so called H\"ormander's condition of rank $r$ on a system of \textit{real} smooth vector fields. Let us recall that this means that the system of vector fields, along with their commutators of length $r$ (being assumed that $X_1$, $[X_1,X_2]$, $[X_1,[X_2,X_3]]$,...  are counted as commutators of length 1, 2, 3,.. and so on), generate the whole tangent space at each point, namely, in our case, $\mathbb{R}^{n+1}$. 

\begin{theorem}[Carleman estimate for $P_1$ and $P_1^*$]\label{thm.Carleman}
    Let $f\in C^\infty(\mathbb{R}^{n+1})$,
    $\{iX_i\}_{i=1}^N$ be a system of real smooth vector fields satisfying  $(\mathrm{H}_1)$, and $K$ a compact subset of $\R^{n+1}$.
    Then,
    \begin{itemize}
    \item[(i)] If $f\in C^\infty(\R^n)$ is such that there exists $C_0>0$ for which
    \begin{align}
        -iX_1f(x)\geq C_0>0, \quad \forall x\in K,
    \end{align}
     then there exist $\lambda_0=\lambda_0(K)\geq 1$ and $C=C(K)>0$ such that, for every $\lambda\geq\lambda_0$, 
   \begin{equation}
        \|e^{-\lambda f}P_1^*u\|_{L^2}^2,\|e^{-\lambda f}P_1u\|_{L^2}^2\geq C \lambda \|e^{-\lambda f}u\|_{L^2}^2, \quad \forall u\in C_0^\infty(K).
    \end{equation}
        \item [(ii)] If the system of vector fields $\{iX_i\}_{i=1}^n$ satisfies H\"ormander's condition of rank $r$, and if there exists $C_0>0$ for which 
        $$-iX_1f(x)\geq C_0, \quad \forall x\in K,$$
        then there exist $\lambda_0=\lambda_0(K)\geq 1$ and $C=C(K)>0$ such that, for every $\lambda\geq \lambda_0$, 
      \begin{equation}
       \|e^{-\lambda f}P_1^*u\|_{L^2}^2, \|e^{-\lambda f}P_1u\|_{L^2}^2\geq C \la \|e^{-\lambda f}u\|_{H^{\frac{1}{r}}}^2,\quad \forall u\in C_0^\infty(K).
    \end{equation}
    \end{itemize}
\end{theorem}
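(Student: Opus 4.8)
The plan is to feed the two lower bounds supplied by Lemma \ref{lemma} (for $P_1$) and Lemma \ref{lemma.P1adjoint} (for $P_1^*$) into a single absorption argument, using that the operators $Q_j$ and $Q_j'$ share the same $\lambda$-leading term $\lambda(-iX_1f)X_j^*X_j$ and differ only by terms of strictly lower order in $\lambda$. Writing $v:=\ef$ and treating both operators at once, the first task is to bound $\sum_j(Q_jv,v)$ from below. Since $-iX_1f$ is real valued, commuting one factor $X_j$ through the coefficient gives the exact identity $(-iX_1f)X_j^*X_j=X_j^*(-iX_1f)X_j-(X_j(-iX_1f))X_j$, whence
\begin{align}
\lambda\big((-iX_1f)X_j^*X_jv,v\big)=\lambda\big((-iX_1f)X_jv,X_jv\big)-\lambda\big((X_j(-iX_1f))X_jv,v\big)\geq \lambda C_0\|X_jv\|_{L^2}^2-\lambda C\|X_jv\|_{L^2}\|v\|_{L^2},
\end{align}
using $-iX_1f\geq C_0$ on $K$. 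Every other summand of $Q_j$ (and of $Q_j'$) is either an order-two term carrying no power of $\lambda$ (the $d_1$-term and the cross terms $c_k^{j1}X_k^*X_j$), controlled by $C\sum_j\|X_jv\|_{L^2}^2$, or an order-one term carrying at most one power of $\lambda$, controlled by $\varepsilon\lambda\sum_j\|X_jv\|_{L^2}^2+\lambda C_\varepsilon\|v\|_{L^2}^2$ via Young's inequality. Choosing $\varepsilon$ small and $\lambda$ large so that the positive quantity $\lambda C_0\sum_j\|X_jv\|_{L^2}^2$ absorbs all order-two errors and all $\varepsilon\lambda$-contributions, we arrive at
\begin{align}
\sum_{j=1}^N(Q_jv,v)\geq \tfrac{\lambda C_0}{2}\sum_{j=1}^N\|X_jv\|_{L^2}^2-C\lambda\|v\|_{L^2}^2,
\end{align}
and the identical bound for $\sum_j(Q_j'v,v)$.

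The decisive observation concerns the cubic term. As $X_1f$ is purely imaginary, $(-iX_1f)|X_1f|^2=(-iX_1f)^3\geq C_0^3$ on $K$, while $(-iX_1f)|X_jf|^2\geq 0$ for every $j$; hence, retaining only the $j=1$ summand,
\begin{align}
\lambda^3\sum_{j=1}^N\big((-iX_1f)|X_jf|^2v,v\big)\geq \lambda^3\big((-iX_1f)^3v,v\big)\geq \lambda^3 C_0^3\|v\|_{L^2}^2.
\end{align}
This single term, produced solely by the nondegeneracy $-iX_1f\geq C_0$, powers case (i). Indeed, inserting the two previous displays into Lemma \ref{lemma} (resp. Lemma \ref{lemma.P1adjoint}) and discarding the nonnegative $\sum_j\|X_jv\|_{L^2}^2$, we obtain
\begin{align}
\im(e^{\lambda f}P_1u,v)\geq \lambda^3C_0^3\|v\|_{L^2}^2-C\lambda\|v\|_{L^2}^2-\lambda^2C_K\|v\|_{L^2}^2.
\end{align}
Choosing $\lambda_0$ large enough that the cubic term dominates the lower powers of $\lambda$ gives $\im(e^{\lambda f}P_1u,v)\geq C\lambda\|v\|_{L^2}^2$ for $\lambda\geq\lambda_0$. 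The Cauchy--Schwarz bound $\im(e^{\lambda f}P_1u,v)\leq \|e^{\lambda f}P_1u\|_{L^2}\|v\|_{L^2}$ then yields $\|e^{\lambda f}P_1u\|_{L^2}\geq C\lambda\|v\|_{L^2}$, and squaring proves (i); the argument for $P_1^*$ is verbatim.

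For case (ii) we keep $\sum_j\|X_jv\|_{L^2}^2$ and exploit the subelliptic estimate for H\"ormander's sum of squares: under the rank-$r$ bracket-generating hypothesis on the real vector fields $\{iX_j\}$ one has (see \cite{Ho2}, \cite{RS}), using $\|X_jv\|_{L^2}=\|(iX_j)v\|_{L^2}$,
\begin{align}
\|v\|_{H^{\frac1r}}^2\leq C_s\Big(\sum_{j=1}^N\|X_jv\|_{L^2}^2+\|v\|_{L^2}^2\Big),\qquad \forall v\in C_0^\infty(K).
\end{align}
Combining this with the bound on $\sum_j(Q_jv,v)$ and the cubic lower bound, and absorbing the resulting $-\tfrac{\lambda C_0}{2}\|v\|_{L^2}^2$ into the $-C\lambda\|v\|_{L^2}^2$ term, we get
\begin{align}
\im(e^{\lambda f}P_1u,v)\geq \tfrac{\lambda C_0}{2C_s}\|v\|_{H^{\frac1r}}^2+\lambda^3C_0^3\|v\|_{L^2}^2-C\lambda\|v\|_{L^2}^2-\lambda^2C_K\|v\|_{L^2}^2.
\end{align}
For $\lambda_0$ large the cubic term again absorbs all negative $\|v\|_{L^2}^2$ contributions, leaving $\im(e^{\lambda f}P_1u,v)\geq C\lambda\|v\|_{H^{\frac1r}}^2$. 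Since $\|v\|_{L^2}\leq\|v\|_{H^{\frac1r}}$, Cauchy--Schwarz gives $\|e^{\lambda f}P_1u\|_{L^2}\geq C\lambda\|v\|_{H^{\frac1r}}$, hence the estimate of (ii), and $P_1^*$ is handled identically.

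The main obstacle is the bookkeeping of the first paragraph: one must check, term by term in $Q_j$ and $Q_j'$, that everything other than the single positive weight $\lambda(-iX_1f)X_j^*X_j$ is of strictly lower order in $\lambda$, so that it is absorbed either by $\lambda C_0\sum_j\|X_jv\|_{L^2}^2$ or by the cubic term $\lambda^3C_0^3\|v\|_{L^2}^2$. The only genuinely external input is the subelliptic estimate invoked in (ii).
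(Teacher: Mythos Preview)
Your proof is correct and follows essentially the same route as the paper's: feed the lower bounds from Lemmas \ref{lemma} and \ref{lemma.P1adjoint} into an absorption argument, show that the leading weight $\lambda(-iX_1f)X_j^*X_j$ in $Q_j$ (and $Q_j'$) dominates the lower-order pieces for $\lambda$ large, use the cubic term $\lambda^3(-iX_1f)^3\|v\|_{L^2}^2\geq \lambda^3 C_0^3\|v\|_{L^2}^2$ to control the $L^2$ error, and for (ii) invoke the Rothschild--Stein subelliptic estimate. The only differences are cosmetic: your Young-inequality scaling yields a $-C\lambda\|v\|_{L^2}^2$ error where the paper gets $-\lambda^2 C_K\|v\|_{L^2}^2$ (both are swallowed by the cubic term), and your final Cauchy--Schwarz step is slightly more direct than the paper's $\|e^{\lambda f}P_1u\|^2\geq 2\delta\,\im(\cdot)-\delta\|v\|_{L^2}^2$ formulation.
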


\begin{proof}
The proof relies on the use of Lemma \ref{lemma} and of Lemma \ref{lemma.P1adjoint} when dealing with the statement for $P_1$ and for $P_1^*$ respectively.
Below we shall give a detailed proof of the result for $P_1$, and we omit the one for $P_1^*$. This is done because the proofs of the estimates for $P_1^*$ and for $P_1$ are exactly the same, up to the appearance of different constants depending on the $L^\infty(K)$-norm of some smooth functions (because the difference between the estimates in Lemma \ref{lemma} and Lemma \ref{lemma.P1adjoint} is just in the appearance of possibly different smooth coefficients in $Q_j'$ and $Q_j$).  Since there is no substantial change in the two cases, we focus on $P_1$.

By Lemma \ref{lemma}, for all $\lambda\geq 1$, given a compact $K$ of $\mathbb{R}^n$ on which $-iX_1f\geq C_0>0$, we have
\begin{align}
    &\im(e^{-\lambda f}P_1u, e^{-\lambda f}u)\\
    & \geq \small{\sum_{j=1}^N}(Q_j \ef,\ef)
    +\la^3 \|\, |X_1f|^{\frac 3 2}\ef\|_{L^2}^2-\lambda^2 C_K\|\ef\|_{L^2}^2, \quad \forall u\in C_0^\infty (K),
\end{align}
with 
\begin{align}
    Q_j=Q_j(x,D):=&(\la(-iX_1f)+id_1/2)X_j^*X_j+
   \sum_{k=1}^Nic^{j1}_k X_k^*X_j \\
   &+\Big(-i\la (X_jX_1f) +i/2(c'_j-(X_1d_j)+2(X_jd_1)+d_1d_j)\Big) X_j,
\end{align}
   where $i c^{j1}_k, i c'_k$, $k=1,\ldots, N$, are smooth and real valued functions.
Therefore, once $\lambda$ is big enough, the proof of $(i)$ and of $(ii)$ depends on the estimate we can prove for the second order term $\small{\sum_{j=1}^n}\re(Q_j\ef,\ef)$. Then, let us focus on this term and show how different hypotheses lead to different lower bounds.

In the sequel we will use again the notation $v:=\ef$. First observe that, by Cauchy-Schwarz inequality,
\begin{align}
    \lambda \small{\sum_{j=1}^N}\re((-i X_1f)X_j^*X_j v,v)&\geq \lambda(C_0-\frac{\delta_0}{2})\small{\sum_{j=1}^N} \|X_jv\|_{L^2}^2-\frac{\lambda N}{2\delta_0}\max_{j=1,\ldots,N}\|X_jX_1f\|_{L^\infty} \|v\|_{L^2}^2\\
    &\geq \lambda(C_0-\frac{\delta_0}{2})\small{\sum_{j=1}^N} \|X_jv\|_{L^2}^2-\lambda c_0\|v\|_{L^2}^2, \forall \delta_0\in(0,1],
\end{align}
with $c_0=c_0(\delta_0,\{\|\partial^\alpha f\|_{L^\infty}\}_{|\alpha|=0,1,2}, \{\|\nabla^\alpha A\|_{L^\infty}\}_{|\alpha|=0,1,2})$,
and
\begin{align}
    &\frac i 2\sum_{j=1}^N(d_1X_j^*X_jv,v)-\sum_{j,k=1}^N ic^{j1}_k(X_k^* X_j v,v)\\
    &=\frac i 2\sum_{j=1}^N\Big((d_1X_jv,X_jv)-(X_jv,(X_jd_1)v) \Big)-\sum_{j,k=1}^N (ic^{j1}_k X_j v,X_k  v)-\sum_{j}^N ( X_j v,\sum_{k=1}^N(X_k ic^{j1}_k) v)\\
    &\geq -( \|d_1\|_{L^\infty}+C_1 N+\frac{\delta_0'}{2})\sum_{j=1}^N \|X_jv\|_{L^2}^2-\frac{C_K}{\delta_0'}\|v\|_{L^2}^2,
\end{align}
with a new positive constant $C_1:=\max_{j,k=1,\ldots,N}\{ |c_k^{j1}|\}$, and where $C_K$ is a positive constant depending on all the $L^\infty$-norms of the functions $X_jd_1$ and $X_kc_k^{j1}$.
By similar considerations,
  $$ \Big( \Big(-i\la (X_jX_1f) +i/2(c'_j-(X_1d_j)+2(X_jd_1)+d_1d_j)\Big) X_j v,v\Big)\geq \frac{\delta_1}{2}\sum_{j=1}^N\|X_jv\|^2_{L^2}-\la ^2 \frac{C_K}{\delta_1}\|v\|^2_{L^2},\quad \forall\delta_1\in (0,1],$$
where $C_K$ is a different suitable positive constant depending on the $L^\infty$-norms of $f,a_{jk}$, $c'_j$ and of their derivatives. Therefore,
\begin{align}
    \sum_{j=1}^N(Q_jv,v)\geq \la( C_0-\frac{\delta_0}{2}-\frac{\delta_0'}{2}-\frac{1}{\la}\|d_1\|_{L^\infty}-\frac{1}{\la}C_1N-\frac{\delta_1}{2})\|X_j v\|_{L^2}^2-\la ^2 c_1\|v\|^2_{L^2},
\end{align}
with $c_1=c_1(\delta_0,\delta_0',\delta_1,\{\|\partial^\alpha f\|_{L^\infty}\}_{|\alpha|=0,1,2}, \{\|\nabla^\alpha A\|_{L^\infty}\}_{|\alpha|=0,1,2})$. Then, choosing $\lambda>4(C_1N+\|d_1\|_{L^\infty})C_0^{-1}$ big enough, and $\delta_0,\delta_0',\delta_1$ sufficiently small, so that
$$C_0-\frac{\delta_0+\delta_0'}{2}-\frac{1}{\la}(C_1N+\|d_1\|_{L^\infty})-\frac{\delta_1}{2}\geq \frac{C_0}{2},$$
we get
\begin{align}
  \sum_{j=1}^N(Q_jv,v)\geq \la\frac{C_0}{2}\sum_{j=1}^N\|X_j v\|_{L^2}^2-\la ^2 C_K\|v\|^2_{L^2},  
\end{align}
where $C_K$ is a new positive constant since we have fixed $\delta_0,\delta_0',\delta_1$.

The estimate above yields
\begin{align}
     &\im(e^{-\lambda f}P_1u, e^{-\lambda f}u) \geq \la\frac{C_0}{2}\small{\sum_{j=1}^N}\|X_j v\|_{L^2}^2
    +\la^3 \|\, |X_1f|^{\frac 3 2}v\|_{L^2}^2-\lambda^2 C_K\|v\|_{L^2}^2,
\end{align}
hence, if $\lambda>\max\{4(C_1N+\|d_1\|_{L^\infty})C_0^{-1},2 C_K/C_0^3\}=:\la_0=\la_0(K)$, we have
\begin{align}
     &\im(e^{-\lambda f}P_1u, e^{-\lambda f}u)\geq \la\frac{C_0}{2}\small{\sum_{j=1}^N}\|X_j v\|_{L^2}^2
    +\la^2 \frac{C_K}{2}\|v\|_{L^2}^2,
\end{align}
where, recall, $C_K$ is a constant depending on some fixed constants $N, \delta_0,\delta_0',\delta_1$ and on the $L^\infty$-norms on $K$ of $f, a_{jk}$, $c'_j$, and of their derivatives.

Finally, by Cauchy-Schwarz inequality,
\begin{align}
    \|e^{-\lambda f}P_1 u\|_{L^2}^2\geq 2\delta\Big(\la\frac{C_0}{2}\small{\sum_{j=1}^N}\|X_j v\|_{L^2}^2
    +(\la^2 \frac{C_K}{2}-\frac{\delta}{2}) \|v\|_{L^2}^2\Big),
\end{align}
which gives, taking $\delta$ small enough and not necessarily depending on $K$ (since we can always assume $\frac{C_K}{2}\cdot\lambda>1$ and take, for instance, $\delta=1/2<\la^2\frac{C_K}{4}$), we reach
\begin{align}
    \|e^{-\lambda f}P_1 u\|_{L^2}^2\geq \la\frac{C_0}{2}\small{\sum_{j=1}^N}\|X_j v\|_{L^2}^2
    +\la^2 \frac{C_K}{4}\|v\|_{L^2}^2,\label{thm.carlem.final.est}
\end{align}
which proves part (i) of the theorem.

To prove part (ii) of the statement, we apply the Rothschield-Stein inequality for systems of vector fields satisfying H\"ormander's condition of rank $r$ on the first term on the right hand side of \eqref{thm.carlem.final.est}, that is
$$ \sum_{j=1}^N\|X_j v\|_{L^2}^2 \geq C \|v\|_{H^{\frac{1}{r}}},\quad \forall v\in C_0^\infty (K).$$
This completes the proof.
\end{proof}

\begin{remark}[The case $N=n=1$]
    Note that when $N=n=1$ $(\mathrm{H}_1)$ is trivially satisfied. For the same reason, result (ii) does not apply to this case, since $X_1$ alone cannot generate $\mathbb{R}^2$.
\end{remark}

An immediate application of the Carleman estimates in Theorem \ref{thm.Carleman} is the following local solvability result.

\begin{theorem}\label{thm.solv}
 Let $\{iX_j\}_{j=1}^N$ be a system of vector fields satisfying   $(\mathrm{H}_1)$ such that $X_1(x,D)\neq 0$ for all $x\in \R^{n+1}$. Let also $P_1$ and $P_1^*$ be as in \eqref{eq.P1} and \eqref{eq.P1s} respectively. Then the following properties hold.
\begin{itemize}
    \item [(i)] $P_1$ and $P_1^*$ are $L^2-L^2$ locally solvable at any point of $\mathbb{R}^{n+1}$.
\item[(ii)] If $\{iX_j\}_{j=1}^N$ satisfies H\"ormander's condition of rank $r$, $P_1$ and $P_1^*$ are $H^{-1/r}-L^2$ locally solvable at any point of $\mathbb{R}^{n+1}$.
\end{itemize}
\end{theorem}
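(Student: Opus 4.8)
The plan is to deduce the solvability statements from the Carleman inequalities of Theorem \ref{thm.Carleman} by the classical duality principle: an a priori estimate of the form $\|e^{\la f}Pu\|_{L^2}\geq c\,\|e^{\la f}u\|$ for all $u\in C_0^\infty(K)$ forces the transpose equation $P^*w=g$ to be solvable near the relevant point, in a function space dictated by the norm on the right-hand side. Since Theorem \ref{thm.Carleman} supplies such an estimate \emph{simultaneously} for $P_1$ and for $P_1^*$, the same argument applied to each of them yields the local solvability of $P_1^*$ and of $(P_1^*)^*=P_1$; hence both operators are covered at once. Fix therefore an arbitrary $x_0\in\R^{n+1}$; the whole construction is local around $x_0$.

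The first, and really the only non-formal, step is to produce a weight $f$ satisfying the hypothesis $-iX_1f\geq C_0>0$ of Theorem \ref{thm.Carleman} on a compact neighbourhood $K$ of $x_0$. This is exactly where the standing assumption $X_1(x,D)\neq0$ everywhere is used (as anticipated in the Remark following Lemma \ref{lemma.P1adjoint}): writing $iX_1=\sum_{k}a_{1k}(x)\partial_k$, a real vector field non-vanishing at $x_0$, one checks that the linear weight $f(x):=-\sum_k a_{1k}(x_0)\,x_k$ gives $-iX_1f(x)=\sum_k a_{1k}(x)a_{1k}(x_0)$, which equals $|a(x_0)|^2>0$ at $x_0$ and therefore stays $\geq C_0>0$ on a sufficiently small compact $K\ni x_0$ by continuity. (Equivalently one may straighten $iX_1$ to $\partial_{y_1}$ by the flow-box theorem and take $f=-y_1$.) With this $f$ fixed, Theorem \ref{thm.Carleman} provides $\la_0$ and $C$ so that the Carleman estimate holds for all $\la\geq\la_0$ and all $u\in C_0^\infty(K)$.

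For part (i) I would run the duality argument as follows. Fix $\la\geq\la_0$ and let $g\in L^2_{loc}$. On the subspace $V:=\{e^{\la f}P_1u:\ u\in C_0^\infty(K)\}\subset L^2$, define the linear functional $T(e^{\la f}P_1u):=(u,g)$; it is well defined because the estimate makes $u\mapsto e^{\la f}P_1u$ injective. Using $(u,g)=(e^{\la f}u,e^{-\la f}g)$, Cauchy--Schwarz, the boundedness of $e^{-\la f}$ on $K$, and the Carleman estimate, one bounds $|T(e^{\la f}P_1u)|\leq C'\|g\|_{L^2}\,\|e^{\la f}P_1u\|_{L^2}$, so $T$ is continuous on $V$. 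Hahn--Banach extends $T$ to all of $L^2$ and the Riesz theorem represents it as $T(\cdot)=(\cdot,h)$ with $h\in L^2$. Setting $w:=e^{\la f}h\in L^2$ and recalling that $e^{\la f}$ is real, one gets, for every $u\in C_0^\infty(K)$,
\[
(u,g)=T(e^{\la f}P_1u)=(e^{\la f}P_1u,h)=(P_1u,w),
\]
which is precisely the weak formulation of $P_1^*w=g$ in $U_K$. As $w\in L^2_{loc}$ this gives the $L^2$--$L^2$ local solvability of $P_1^*$; applying the identical reasoning to the Carleman estimate for $P_1^*$ yields that of $P_1$, proving (i).

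Part (ii) is the same scheme with the sharper lower bound $\|e^{\la f}u\|_{H^{1/r}}$ furnished by the Rothschild--Stein step. Now I would estimate the pairing by the $H^{1/r}$--$H^{-1/r}$ duality, $|(u,g)|=|(e^{\la f}u,e^{-\la f}g)|\leq\|e^{\la f}u\|_{H^{1/r}}\|e^{-\la f}g\|_{H^{-1/r}}$, using that multiplication by the smooth function $e^{-\la f}$ is bounded on $H^{-1/r}$ over the compact $K$; combined with the improved estimate this makes $T$ continuous on $V\subset L^2$ for every $g\in H^{-1/r}_{loc}$. The same Hahn--Banach/Riesz step then produces $w=e^{\la f}h\in L^2_{loc}$ solving $P_1^*w=g$ in $U_K$, i.e. the claimed $H^{-1/r}$--$L^2$ local solvability (note that, since $H^{1/r}\subset H^{-1/r}$ locally, this in fact covers the range of data required by the definition), and again the twin estimate for $P_1^*$ handles $P_1$. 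The only genuine obstacle in the whole proof is the weight construction of the second paragraph --- everything after it is the standard transposition of a Carleman inequality into a solvability statement --- and it is resolved precisely by the non-degeneracy of $X_1$.
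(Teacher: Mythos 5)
Your proposal is correct and follows essentially the same route as the paper: use the nondegeneracy of $X_1$ to build a weight $f$ with $-iX_1f\geq C_0>0$ on a compact neighbourhood of $x_0$, invoke Theorem \ref{thm.Carleman} for both $P_1$ and $P_1^*$, and convert each Carleman inequality into local solvability of the adjoint by the Hahn--Banach/Riesz duality argument (which the paper outsources to Lemma 1.2.30 of \cite{L2}, and which you write out explicitly, keeping the weight $e^{\la f}$ inside the functional rather than first absorbing it into constants as the paper does). Your explicit linear weight $f(x)=-\sum_k a_{1k}(x_0)x_k$ and your remark reconciling the $H^{-1/r}$ versus $H^{1/r}$ data-class conventions are harmless refinements of the paper's argument, not a different method.
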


\begin{proof}
    The proof is based on the use of Theorem \ref{thm.Carleman} and of some standard functional analysis argument.
    More specifically, the $H^s-L^2$ local solvability of $P_1$ at a point $x\in\mathbb{R}^{n+1}$, is equivalent to the validity of the following \textit{solvability estimate} for the adjoint $P_1^*$: there exists $C>0$ and a compact $K$ containing $x$ in its interior $U_K$, such that
    $$\|P_1^*u\|_{L^2}\geq C \|u\|_{H^{-s}}, \quad \forall u\in C_0^\infty(U_K).$$
    The solvability estimate, Hahn-Banach and Riesz representation theorems, alltogether give the result (see, for instance, Lemma 1.2.30 in \cite{L2}).
    Since $X_1$ is nondegenerate (see also Remark \ref{remarkX1}), given $C_0>0$ and $x_0\in\R^{n+1}$, it is always possible to find a smooth function $f$ and a compact set $K\subset \mathbb{R}^{n+1}$ containing $x_0$ in its interior, such that $iX_1f(x)\geq C_0>0$ for all $x\in K$.
    Hence, by the Carleman estimate for $P_1^*$, for $x_0\in\mathbb{R}^{n+1}$, and for the compact $K$ chosen as above and containing $x_0$ in its interior, there exists $\lambda_0(K)>1$  and $C>0$ such that, for all $\lambda>\lambda_0$,
    \begin{align}
      \|e^{-\lambda f}P_1^*\|_{L^2}\geq C  \|\ef\|_{H^{-s}}, \quad \forall u\in C_0^\infty(K), \label{eq.thm.solv.1}
    \end{align}
    where $s=0$ in case (i) and $s=-1/r$ in case (ii).
By estimating from above and from below $e^{-\lambda f}$ on $K$ with positive constants, \eqref{eq.thm.solv.1} easily implies the solvability estimate with $s=0$ under the hypothesis in part (i), and with $s=-1/r$ under the hypothesis in part (ii). From the solvability estimate, by the standard considerations mentioned at the beginning of the proof (see \cite{L2}) we get the $L^2-L^2$ and $H^{-\frac{1}{r}}-L^2$ local solvability of $P_1$ at $x_0$ according to the hypotheses in (i) and (ii) respectively. Finally, by the arbitrariness of $x_0$, we conclude the local solvability of $P_1$ at any point of $\R^{n+1}$.

For the local solvability of $P_1^*$, one repeats the same steps reversing the roles of $P_1$ and $P_1^*$.
\end{proof}

\begin{remark}
    When the vector field $iX_1$ is nondegenerate only on a compact $K \subset \R^{n+1}$, using the previous strategy we can reach the local solvability of $P_1$ and of $P_1^*$ at any point of the interior  $U_K$ of $K$.
\end{remark}

\section{Examples and applications: KdV and non KdV-type operators with variable coefficients}\label{sec.example-applic}
In this section we would like to list a few examples of operators of $P_1$-type. We pointed out more than once that we are particularly interested in dispersive models, therefore we will mainly focus on such cases here, even though the class is more general and we could exhibit different type of operators.

\noindent\textbf{Variable coefficient KdV operators.}
Let $N=n=1$, $\mathbb{R}^{n+1}=\mathbb{R}^2=\mathbb{R}_t\times \mathbb{R}_x$, and
\begin{align}
    X_0(t,x,D_t,D_x)=D_t,\quad \text{and}\quad X_1(t,x,D_t,D_x)=a(x)D_x,
\end{align}
where $a\in C^\infty(\mathbb{R}_x;\R)$, $a(x)\neq 0$ for all $x\in \mathbb{R}$.
Then 
\begin{align}
    P_1(t,x,D_t,D_x)&=D_t+ (a(x)D_x)(a(x)D_x)^*(a(x)D_x)\\
    &=-i(\partial_t+(a(x)\partial_x)(a(x)\partial_x)^*(a(x)\partial_x))
\end{align}
which is a KdV operator with space-variable coefficients. Our solvability theorem applies to this model, so we can assert that this operator, as well as $iP_1$ of course, is $L^2-L^2$ locally solvable at any point of $\mathbb{R}^2$.
 More generally, one can consider the coefficient $a$ as a function of both $(t,x)$, hence $a(t,x)$, or even just a time dependent function, provided that tit is everywhere nonvanishing.
In this last case one can also describe some of the operators treated in \cite{AJAC}.

\noindent \textbf{Variable coefficient ZK operators.}
Let $N=n=2$, $\mathbb{R}^{n+1}=\mathbb{R}^3=\mathbb{R}_t\times \mathbb{R}^2_x$, and
\begin{align}
    X_0(t,x,D_t,D_x)=D_t,\quad \text{and}\quad X_j(t,x,D_t,D_x)=X_j(x,D_x), \quad j=1,2,
\end{align}
with $X_1,X_2$ such that $X_1^*X_1(x,D_x)+{X_2}^*X_2(x,D)$ is an elliptic operator on $\mathbb{R}_x^2$. In other words, we are assuming the vector fields $iX_j(x,D)$ to be nondegenerate and linearly independent at any point $x\in\mathbb{R}^2_x$, so they form a global involutive distribution both in $\R^2$ and in $\R^3$. 
Under these assumptions the operator
\begin{align}
    P_1=D_t+X_1(x,D_x)\left(X_1^*X_1(x,D_x)+X_2^*X_2(x,D)\right)
\end{align}
is a prototype of a variable coefficient ZK-operator to which our solvability theorem applies. Hence, we have  the $L^2-L^2$ local solvability property for $P_1$ in $\R^{n+1}$ by Theorem \ref{thm.solv}.\\
\indent
Notice that the ''ellipticity in space'' of $X_1^*X_1(x,D_x)+X_2^*X_2(x,D)$ implies global involutivity, thus any prototype of this sort fits in our class. On the other hand, we assumed the ellipticity in space of $X_1^*X_1(x,D_x)+X_2^*X_2(x,D)$ just in order to represent a variable coefficient analogue of the ZK operator, and not because $P_1$-type operators need to satisfy this requirement. In fact, we can take more general cases, provided that the global involutivity is satisfied. 
In general, to describe a variable coefficient $P_1$-type ZK operator, one can take $X_1,X_2$ of the form
\begin{align}
 X_1(x,D_x)&=a_{11}(x)D_1+a_{12}(x)D_2,\quad a_{11}(x)\neq 0 \quad \text{or}\quad a_{12}(x)\neq 0\quad \forall x\in\mathbb{R}^2,\\
 X_2(x,D_x)&=a_{21}(x)D_1+a_{22}(x)D_2 \\
 [X_1,X_2](x,D_x)&=c_1(x)X_1(x,D_x)+c_2(x)X_2(x,D_x),\quad \quad  c_1,c_2\in C^\infty(\R^2;i\R),\quad \forall x\in\mathbb{R}^2_{x},
\end{align}
with $a_{ij}\in C^\infty(\R^2;\R)$ for all $i,j=1,2$, where the global involutivity is given by the third condition. 
In such a case our solvability theorem still applies.

\noindent\textbf{Non KdV-type operators.} To give a more complete picture of the objects of this paper, we provide immediate examples of non KdV-type operators of $P_1$-type. \\
\indent Let $N\leq n$ and $\R^{n+1}=\R_t\times\R_x$. Let also $\{iX_j \}_{j=1}^N$ be a global involutive system of real smooth vector fields in $\R^{n}_x$ (depending only on the space variables and on space derivatives), $X_1$ nondegenerate on $\R^{n+1}$, and $X_0\equiv 0$. Then
$$P_1(t,x,D_t,D_x)=P_1(x,D_x)=X_1\sum_{j=1}^NX_j^*X_j,$$
is of $P_1$-type both on $\R^{n}$ and on $\R^{n+1}$ but not of KdV-type on $\R^{n+1}$, since there is no time-evolution here. Note that we could also take $N\leq n+1$ and define an operator whose leading part has variable coefficients depending both on space and time.

\noindent\textbf{More degenerate variable coefficient KdV-type operators.} Still in the case $n=2$, one can  consider models where $N=1$, $iX_1(x,D_t,D_x)=X_1(x,D_x)$ is a nonvanishing real vector field with smooth coefficients, and $X_0=D_t$. In such a case we have no ellipticity in space for $X_1^*X_1$, since $X_1^*X_1$ cannot be elliptic under our assumption that $iX_1$ has real coefficients. Nevertheless, all the hypotheses, namely $(\mathrm{H1})$ and the nondegeneracy of $X_1$, are satisfied, hence our results apply - the Carleman and the solvability theorem - and the operator is locally solvable at any point of $\R^3$.\\
\indent This example can be generalized in any dimension $n\geq 2$. By taking $\{iX_j(x,D_x)\}_{j=1}^N$, $N\leq n$, being a globally involutive system of real smooth vector fields (indexed in such a way that $X_1$ is nondegenerate), and taking $iX_0=\partial_t$,  the corresponding operator $P_1$ is built from an operator $\mathcal{L}:=\sum_{j=1}^NX_j^*X_j$ which is not necessarily elliptic in space. 

In the rest of this section we will give concrete examples of KdV and non KdV $P_1$-type operators constructed via some Lie algebras, specifically stratified Lie algebras. These cases are taken into account due to their global involutive structure. One can, of course, design many other different examples.

\noindent \textbf{KdV-type operators built via the Heisenberg Lie algebra.} Let us start with the example mentioned in the introduction and related to the Heisenberg group. We restrict ourselves to $N=n=3$ - hence $P_1$ is defined on $\mathbb{R}^4_{t,x}$- which can be immediately generalized to the case $N=n=2k+1$, for any positive integer $k$. \\
\indent We take, as before, $X_0=D_t$ and $iX_j$, $j=1,2,3,$ as the vector fields giving the canonical basis of the Lie algebra of $\mathbb{H}^1$. Recall that the Lie algebra $\mathfrak{h}^1$ is stratified of step 2, and that $X_1,X_2$ are the so-called ''generators'' of the stratified Lie algebra. These vector fields, together with their commutator, generate the whole Lie algebra $\mathfrak{h}^1=\mathrm{Span}\{X_1,X_2,X_3\}$. In fact, $[X_1,X_2]=X_3$ and $[X_1,X_3]=[X_2,X_3]=0$ for all $x\in \mathbb{R}^3$. This guarantees that the vector fields $X_j$, $j=1,2,3$, form a global involutive structure. 
For completeness, we recall the expression of the vector fields below
\begin{align}
   i X_1(t,x,D_t,D_x)&= i X_1(x,D_x)=\partial_{x_1}-\frac{x_2}{2}\partial_{x_3},\\
 i X_2(t,x,D_t,D_x)&= i X_2(x,D_x)=\partial_{x_2}+\frac{x_1}{2}\partial_{x_3},\\
i X_3(t,x,D_t,D_x)&= i X_3(x,D_x)=\partial_{x_3}.
\end{align}
Note that $X_j=X_j^*$, thus $P_1=P_1^*$ in this case. Moreover, $X_j(x,D)$ is nondegenerate for all $j=0,\ldots, 3$, implying that the operators
$$P_{1,j_0}(t,x,D_t,D_x)=X_{j_0}(x,D_x)\sum_{j=1}^3X_j^2(x,D_x)+D_t,\quad j_0=1,2,3,\quad (t,x)\in\mathbb R^4,$$
are locally solvable at any point of $\R^4_{t,x}$ by Theorem \ref{thm.solv}. To define the same kind of operators in higher dimension it suffices to use the Heisenberg Lie algebra $\mathfrak{h}^k$ of dimension $2k+1$, and take $N=2k+1= n$.

\noindent \textbf{KdV-type operators built via the Heisenberg Lie algebra in higher dimensional spaces.} By taking the same vector fields as above generating $\mathfrak{h}^1$, one can cook up more singular operators in higher dimensional spaces. Take $N=3<n$, $\R^{n+1}=\R_t\times \R^3_x\times \R^{n-3}_y$, and $P_{1,j_0}$ as before, but now defined on $\R^{n+1}$, that is

$$P_{1,j_0}(t,x,y,D_t,D_x,D_y)=X_{j_0}(x,D_x)\sum_{j=1}^3X_j^2(x,D_x)+D_t,\quad j_0=1,2,3,\quad (t,x,y)\in\mathbb R_t\times \R^3_x\times\R^{n-3}_y.$$
Then we have that the system of vector fields $\{iX_j(x,D_x)\}_{j=1,2,3}$ is still globally involutive in $\R^{n+1}_{t,x,y}$, and the hypotheses of Theorem \ref{thm.solv} are satisfied. Hence, we get local solvability for these operators at any point of $\R^{n+1}_{t,x,y}$.\\
\indent Similar operators can be written by using the Heisenberg Lie algebra $\mathfrak{h}^k$, for every $k\geq 1$, and $N=2k+1<n$.

\noindent \textbf{KdV-type operators built via stratified Lie algebras: general construction.}
Following the previous constructions, we can formalize the procedure to build operators as above with any stratified Lie algebra of step $r$ on $\R^m$. Let  $\{iX_j\}_{j=1}^m$ be the canonical basis of $\mathfrak{g}$ (via the expnential map), and
\begin{align}
    \mathfrak{g}&=\oplus_{j=1}^r \mathfrak{g}_j,\quad [\mathfrak{g}_j,\mathfrak{g}_k]\subset\mathfrak{g}_{k+j},
\end{align}
where $\mathfrak{g}_1$ generates $\mathfrak{g}$ as an algebra, that is, linear combinations of the elements of $\mathfrak{g}_1$ and of their iterated commutators up to length $r$, generate the whole $\mathfrak{g}$.
In particular, one has that
$$n_j:=\mathrm{dim}(\mathfrak{g}_j)=\mathrm{dim}(\mathrm{Span}\{iX_1(x,D_x),\ldots, iX_{n_j}(x,D_x) \})$$
is such that
$$\sum_{j=1}^r n_j=m.$$
Let now $N=m\leq n$, $\R^{n+1}=\R_t\times \R^m_x\times \R^{n-m}_y$, $j_0=1,\ldots,m,$ and
\begin{align}
    P_{1,j_0}(t,x,y,D_t,D_x,D_y)=X_{j_0}(x,D_x)\sum_{j=1}^m X_j^*(x,D_x)X_j(x,D_x)+D_t,\quad \,\,(t,x,y)\in \R_t\times \R^3_x\times\R^{n-3}_y.
\end{align}
Then, once again, we have that $\{iX_j\}_{j=1}^m$ is a global involutive system of vector fields on $\R^{n+1}$. Moreover, the vector fields, being elements of the canonical basis of the Lie algebra, are all nondegenerate in $\R^{n+1}$. Hence, for every $j_0=1,\ldots, m$, the operator $ P_{1,j_0}$ satisfies the hypotheses of Theorem \ref{thm.solv}, leading to the local solvability property for all these models.

\noindent\textbf{Non KdV-type operators built via Lie algebras.} 
One can get immediate examples of non KdV-type operators just by taking the previous examples with $X_0\equiv 0$. 
 
\begin{remark}
In our examples above we have focused primarily on space-variable coefficient KdV-type operators. We want to stress that several KdV-type operators, with coefficients depending both on $(t,x)$ or just on $t$,  are still included in our class $P_1$.
\end{remark}
\section{Final remarks and applications to dispersive equations}

In this conclusive section we would like to discuss possible applications in the context of dispersive equations of KdV-type, as well as some open connected questions.

\noindent \textbf{Uniqueness problems.} \label{sec.finalRem-Applic}
Uniqueness problems, or Hardy uncertainty principles (see \cite{EKPV4}), for dispersive equations, are strictly related to Carleman estimates. 

In general, when dealing with dispersive equations, one aims at global in space and local or global in time results. Global in time results are more challenging an not always attainable, so it is a natural common rule to investigate local in time properties first, and, afterwards, to extend, if possible, the result at all times.

The same considerations apply to Carleman estimates. This means that one needs global in space and local in time estimates if one wants to apply them in the context of dispersive equations. This is usually done via a combination of strategies. One of this strategies consists in considering the operator with bounded space-variable coefficients with a certain decay property. This condition is actually not just a technical condition, but is also related to the validity of smoothing properties for dispersive operators (see, for instance, \cite{Doi},\cite{KPRV}, \cite{FS0}, \cite{RZ},\cite{MMT}). Once the Carleman estimate is made global (in space) through a series of conditions in spirit as the one above, then one can attain uniqueness results via a precise cutting off procedure according to the assumptions considered on the solutions.

That said, it should be possible to reach global (in all but at most one direction) Carleman estimates for $P_1$ and $P_1^*$ from our local estimates in Theorem \ref{thm.Carleman}. These global estimates can be applied to study uniqueness problems for dispersive operators belonging to our class.  For what we just explained above, by introducing suitable conditions on the coefficients of the operator and on the weight function $f$, one should be able to produce the desired global inequality. However, even if the road map to get a global inequality is in the proof of Theorem \ref{thm.Carleman}, its derivation and the application to uniqueness problems is nontrivial. \\
\indent Let us also stress that, if one is interested in the study of local unique continuation properties, then our estimates are already strong enough to pursue this goal, and one should focus more on the study of the geometric conditions  leading to the result, that is, in other words, on the  choice of the suitable weight function $f$.

\noindent\textbf{Other KdV-type models and relative questions.}
 In our discussion, we have motivated our analysis with the connection of the class $P_1$ with KdV-type operators. 
  However, it must be said that we have a rigorous derivation of the ZK equation as a sort of generalization of the KdV one in (space) dimension 2, but for the higher dimensional case the argument is much harder and not established to the author knowledge. On the other hand, it is known that third order equations play a central role in water waves, nonlinear optic and related fields. A detailed interesting description of third order dispersive equations and of their physical role is contained in \cite{KS}, where a class of third order operators with constant coefficients is studied. Let us remark that  a subclass of the class in \cite{KS} of constant coefficient operators is included in our general variable coefficient class $P_1$. That said, it would be interesting to investigate Carleman estimates, local solvability, and connected problems, for operators in the very general form
  \begin{align}
      P_2(x,D):=\sum_{j=1}^{M}\sum_{k=1}^{N}X_jX_k^*X_k+X_0, \quad 1\leq M\leq N, x\in\R^{n+1}.
  \end{align}
Note that this class, which generalizes and contains $P_1$, clearly still contains KdV-type operators,  both with constant and with variable coefficients, as well as the whole class studied in \cite{KS}.  Due to these considerations, it would be interesting to understand weather $P_2$ is the right generalization of KdV-type operators, or if $P_1$ provides the best description, being clear that $P_2$ is a class wider than $P_1$.
In any case, the validity of Carleman estimates in this generality is still an open problem.


\bibliographystyle{plain}

\end{document}